\documentclass{amsart}
\frenchspacing
\usepackage{amssymb, mathrsfs, bbm}
\usepackage[T1]{fontenc}
\usepackage[sc, osf]{mathpazo}
\linespread{1.10}
\selectfont
\usepackage[protrusion=true, expansion=true]{microtype}

\usepackage[all]{xy}
\SelectTips{eu}{}
\entrymodifiers={+!!<0pt,\fontdimen22\textfont2>}

\usepackage[pdftex, colorlinks=true, linkcolor=blue, citecolor=magenta, linktocpage]{hyperref}
\usepackage{rotfloat}
\usepackage{url}

\theoremstyle{plain}

\newtheorem{prop}[subsection]{Proposition}

\theoremstyle{remark}

\theoremstyle{definition}

\numberwithin{equation}{subsection}

\def\e{\mathbbm{1}}

\def\cA{\mathcal{A}}
\def\cB{\mathcal{B}}
\def\cC{\mathcal{C}}
\def\cD{\mathcal{D}}

\def\cO{\mathcal{O}}

\def\11{\mathbf{1}}

\def\BB{\mathbf{B}} 
\def\CC{\mathbf{C}}

\def\QQ{\mathbf{Q}} 
\def\RR{\mathbb{R}}

\def\ZZ{\mathbf{Z}}

 \def\fg{\mathfrak{g}}

 \def\fsl{\mathfrak{sl}}

 \def\can{\mathrm{can}}

 \def\dim{\mathrm{dim}}

 \def\Hom{\mathrm{Hom}}
 \def\id{\mathrm{id}}

 \def\pr{\mathrm{pr}}

 \def\var{\mathrm{var}}
 \def\wt{\mathrm{wt}}

 \newcommand{\smatrix}[1]{\left(\begin{smallmatrix}#1\end{smallmatrix}\right)}

 \newcommand{\mapright}[1]{\xrightarrow{#1}}

 \newcommand{\totimes}{\mathop{\widetilde{\otimes}}}

\makeatletter
\renewcommand{\@makefnmark}{\mbox{\textsuperscript{}}}
\makeatother

\title{Graded tensoring and crystals}
\author{R. Virk}
\email{rsvirk@gmail.com}
\begin{document}
\maketitle
\setcounter{tocdepth}{1}
\tableofcontents
\section{Introduction}
In the representation theory of reductive groups, semi-simple Lie algebras, etc., various categories of representations are often treated as `modules' for the corresponding monoidal category of finite dimensional representations (= Satake category). For instance, the Bernstein-Bernstein-Gelfand category $\cO$, of a semi-simple Lie algebra \cite{BGG}, is stable under tensoring with finite dimensional representations (and hence a module for the Satake category in a loose sense). Restricting to a block of $\cO$, corresponding to a fixed central character (say the trivial one), this Satake action induces an action of the Hecke category (for example, see \cite{St1}; also see \cite{So}). Of course, in order to get the Hecke action it is crucial to upgrade to a mixed setting (\'a la \cite{BGS}). 

The importance of the graded setting leads us to ask if it is possible to define an action of the Satake category on \emph{all} of graded category $\cO$ (not just a single block at a time)? Apart from intrinsic interest, this question is also motivated by trying to understand categories of Harish-Chandra modules (for real groups) for which category $\cO$ serves as a toy model. This line of questioning has been pursued by W. Soergel (unpublished). Soergel observes that although such `graded tensoring' may be defined, it is not associative. 
More precisely, for each finite dimensional representation $V$, one may define an endofunctor $V\totimes -$ on graded category $\cO$. Forgetting the grading yields the ordinary tensor product $V\otimes -$. Further, if $W$ is another finite dimensional representation, then 
\[ (V\otimes W)\totimes - \simeq V\totimes (W \totimes -).\]
However, these isomorphisms cannot be chosen in a coherent fashion (they do not satisfy the pentagon axiom). Put another way, the isomorphisms cannot be chosen to be compatible with their non-graded counterparts. 

In other words, one cannot obtain a monoidal functor from the Satake category (or even its quantum version) to the category of endofunctors of graded category $\cO$ (compatible with the usual tensor product). The underlying cause for this can be seen reasonably explicitly for $\fsl_2$: if such a functor were to exist, then graded tensoring with the tautological (2-dimensional) representation would be self bi-adjoint. Combinatorics at the Grothendieck group level yields that this is impossible (cf. \cite[Lemma 3.3]{BS}).

So, we have an action of some mysterious monoidal category on graded category $\cO$. From a distance it looks like the Satake category. Closer inspection reveals this to be a masquerade.

Due to lack of other evident candidates, Soergel has suggested that perhaps this is an action of the corresponding category of crystals. The purpose of this note is to provide some small evidence towards this (however, do see \cite{S}). I construct an action of the crystal category on (integral) graded category $\cO$ for $\fsl_2$ by brute force. Although I have not made it explicit, it is readily seen that this action lifts the usual non-graded tensor product (in an essentially unique way).

At the moment I do not have much of an understanding of the corresponding story for Harish-Chandra modules for $SL_2(\RR)$. I understand that forthcoming work of A. Glang and O. Straser will shed some light on this.

\textbf{Acknowledgments:} This work was done while I was visiting the Mathematisches Institut at Albert-Ludwigs-Universit\"at, Freiburg during the summer of 2012. I am indebted to W. Soergel for his hospitality and for patient tolerance of my daily barrage of silly questions. I am also grateful to A. Glang and O. Straser for so openly and patiently explaining me some of the ideas in their theses. Finally, my thanks to C. Stroppel for some very useful comments on an earlier version of this document.

\section{Conventions}
Categories will always be required to have zero objects. When dealing with monoidal categories the `$\otimes$' symbol will sometimes be omitted. For example, $FG$ will be written instead of $F\otimes G$, $F^2$ instead of $F\otimes F$, etc. Unit objects will be denoted by $\e$. An action of a monoidal category $\cC$ on a category $\cD$ will mean a monoidal functor from $\cC$ to endofunctors of $\cD$.

A $\QQ$-linear category will mean an additive category in which $n\cdot \id$ is an isomorphism for all $n\in \ZZ - \{0\}$. 
A $\QQ$-linear functor will mean a functor between $\QQ$-linear categories that respects the $\QQ$-vector space structure on $\Hom$-sets.

$\mathrm{Rep}(\fsl_2)$ will denote the monoidal category of finite dimensional representations of the Lie algebra $\fsl_2(\CC)$. The symbol `$L$' will be reserved for the tautological 2-dimensional irreducible representation in $\mathrm{Rep}(\fsl_2)$.

$\mathrm{Vect}^{\ZZ}$ will denote the category of graded (finite dimensional) $\CC$-vector spaces. For each $n\in \ZZ$ there is an isomorphism of categories 
\[ \langle n \rangle \colon \mathrm{Vect}^{\ZZ}\mapright{\sim} \mathrm{Vect}^{\ZZ} \]
(with inverse $\langle -n \rangle$) defined as follows: for a graded vector space $V$ with degree $i$ component $V_i$, the degree $i$ component of $V\langle n \rangle$ is $V_{i-n}$.

\section{Generators and relations}\label{s:gens}
Let $\cC'$ be the free monoidal category generated by one object $F$, morphisms $\e\to F^2$ and $F^2\to \e$, subject to the relations:
\begin{enumerate}
\item the composition $\e\to F^2 \to \e$ is the identity;
\item the composition $F \to F^2F = FF^2 \to F$ is zero;
\item the composition $F \to FF^2 = F^2F \to F$ is zero.
\end{enumerate}
A morphism $F^m \to F^n$ can be represented by the following pictorial data (string diagrams): 
\begin{itemize}
\item a closed rectangle in the plane with two opposite edges designated as the top and bottom row;
\item $n$ (resp. $m$) marked points on the top (resp. bottom) row;
\item smooth (pairwise) non-intersecting curves (`strings') in the rectangle connecting each point to exactly one other point.
\end{itemize}
Two string diagrams are considered equivalent if they induce the same pairing of the $n+m$ marked points.
Composition is accomplished by vertical juxtaposition, subject to the rules:
\[\xy 
(-5,0)*{\bullet};(5,0)*{\bullet}; 
**\crv{(0,-15)} 
**\crv{(0,15)}
\endxy
= \id
\]
\[\xy
(-5,15)*{\bullet}="t";
(-5,5)*{\bullet}="m"; 
(15,-5)*{\bullet}="b";
(5,5)*{\bullet}="m1";
(15,5)*{\bullet}="m2";
"b";"m2";**\dir{-};
"m";"m1";**\crv{(0,-5)};
"m";"t";**\dir{-};
"m1";"m2";**\crv{(10,15)};
\endxy
\quad=\quad 0\quad =\quad\xy
(15,15)*{\bullet}="t";
(-5,5)*{\bullet}="m"; 
(-5,-5)*{\bullet}="b";
(5,5)*{\bullet}="m1";
(15,5)*{\bullet}="m2";
"b";"m";**\dir{-};
"m2";"t";**\dir{-};
"m1";"m2";**\crv{(10,-5)};
"m";"m1";**\crv{(0,15)};
\endxy
\]
Tensor product is given by horizontal juxtaposition.
The translation from generators and relations to string diagrams is accomplished via the following prescription.
The identity map $F\to F$ is given by
\[ \xy
(0,5)*{\bullet}; (0,-5)*{\bullet};
**\dir{-}
\endxy
\]
The morphism $\e\to F^2$ is given by
\[\xy 
(-5,0)*{\bullet};(5,0)*{\bullet}; 
**\crv{(0,-10)} 
\endxy
\]
The morphism $F^2\to \e$ is given by
\[\xy 
(-5,0)*{\bullet};
(5,0)*{\bullet}; 
**\crv{(0,10)} 
\endxy
\]
\begin{prop}
We have
\[|\Hom_{\cC'}(F^m, F^n)-\{0\}| = \dim_{\CC} \Hom_{\mathrm{Rep}(\fsl_2)}(L^{\otimes m},L^{\otimes n}).\]
\end{prop}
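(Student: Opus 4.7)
The plan is to show that both sides equal the number of non-crossing matchings of $m+n$ points arranged with $m$ on the bottom edge and $n$ on the top edge of a rectangle.

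For the upper bound $|\Hom_{\cC'}(F^m, F^n) - \{0\}| \leq \#\{\text{non-crossing matchings}\}$, every morphism $F^m \to F^n$ is a composition of the generators $\id_F, c, e$, which can be pictured as a planar diagram with intermediate marked points. Relation~(1) permits removal of any closed loop at a cost of the scalar $1$. Relations~(2) and~(3) identify any composition whose diagram contains an S- or Z-shaped zigzag kink with the zero morphism. After these simplifications the surviving reduced diagram is precisely a non-crossing matching of the $m+n$ external points, and two such are equivalent exactly when their induced pairings coincide.

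For the lower bound --- verifying that distinct non-crossing matchings give distinct non-zero morphisms --- I would exhibit a faithful monoidal functor $\cC' \to \cD$ where $\cD$ is an auxiliary model category whose Hom-spaces, by construction, have a basis indexed by non-crossing matchings. A natural candidate is the $\QQ$-linear category with Hom-spaces freely spanned by non-crossing matchings, whose composition rule vertically juxtaposes, removes closed loops with weight $1$, and declares the composite zero whenever the trace exhibits an S- or Z-kink. Verifying that $\cD$ is well-defined as a monoidal category and obeys relations (1)--(3) then gives the required functor, and faithfulness is immediate. This step is the main obstacle: the combination of relation~(1) (closed loop $= 1$) with relations~(2), (3) (zigzag $= 0$) places $\cC'$ outside the classical Temperley--Lieb / $\mathrm{Rep}(\fsl_2)$ framework --- where the snake relation would instead make zigzags equal to the identity --- so faithfulness cannot be imported from $\mathrm{Rep}(\fsl_2)$ directly.

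For the right-hand side, the identity $\dim_\CC \Hom_{\mathrm{Rep}(\fsl_2)}(L^{\otimes m}, L^{\otimes n}) = \#\{\text{non-crossing matchings of } m+n \text{ points}\}$ is classical (essentially the Rumer--Teller--Weyl first fundamental theorem for $\fsl_2$). Concretely, decomposing $L^{\otimes k} \cong \bigoplus_j V_j^{\oplus a_{k,j}}$ with $a_{k,j}$ counting ballot sequences gives $\dim \Hom(L^{\otimes m}, L^{\otimes n}) = \sum_j a_{m,j} a_{n,j}$, and a standard Catalan-type bijection identifies this sum with the number of non-crossing matchings.
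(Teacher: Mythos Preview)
Your approach is correct and follows essentially the same route as the paper: both sides are identified with the number of non-crossing perfect matchings of $m+n$ points (equivalently, the relevant Catalan number). The paper's own proof is much terser. It treats the string-diagram description of $\cC'$ (stated in the paragraphs preceding the Proposition) as already established, and then simply cites Frenkel--Khovanov for a basis of $\Hom_{\mathrm{Rep}(\fsl_2)}(L^{\otimes m},L^{\otimes n})$ indexed by those same diagrams, or alternatively invites the reader to count both sides directly via Catalan numbers.

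Where you go further than the paper is in trying to justify the string-diagram description itself. You are right that the lower bound (distinct matchings give distinct nonzero morphisms in $\cC'$) is where the content lies, and right that $\cC'$ is \emph{not} the usual Temperley--Lieb category: here the zigzags are zero rather than identities, so faithfulness cannot be borrowed wholesale from $\mathrm{Rep}(\fsl_2)$. Your auxiliary category $\cD$ is exactly the right construction---indeed it is precisely the diagrammatic category the paper describes informally before the Proposition. The associativity you flag as the main obstacle does hold: in a three-layer concatenation $M_3 M_2 M_1$, a cap-then-cup kink along any strand is detected by either parenthesization, because collapsing two adjacent layers to a single matching preserves both the induced pairing and the presence (or absence) of such a kink. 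Once $\cD$ is in hand, the functor $\cC'\to\cD$ exists by the universal property, and your upper-bound argument shows it is bijective on Hom-sets.
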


\begin{proof}
There is a basis for $\Hom_{\mathrm{Rep}(\fsl_2)}(L^{\otimes m}, L^{\otimes n})$ that is in bijection with string diagrams representing distinct morphisms $F^m\to F^n$ (see \cite[\S2]{FK}).

Alternatively, the reader may find it pleasant to enumerate both the set of string diagrams and invariant vectors in $L^{\otimes m+n}$ using the Catalan numbers.
\end{proof}

Let $\cC$ denote the Karoubi envelope (sometimes also called the pseudo-abelian envelope) of the $\QQ$-linearization of the category $\cC'$. So $\cC$ comes equipped with a faithful functor $\cC' \to \cC$. Further, the monoidal structure on $\cC'$ extends to $\cC$, and upgrades $\cC'\to \cC$ to a monoidal functor. Note:
\[ \dim_{\QQ} \Hom_{\cC}(F^m, F^n) = \dim_{\CC} \Hom_{\mathrm{Rep}(\fsl_2)}(L^{\otimes m}, L^{\otimes n}).\]

Unfortunately, I do not know of a convenient reference that provides detailed constructions/proofs of all the relevant properties of Karoubi completions, linearizations, extensions of monoidal structures, etc. (much of it seems to be folklore in the literature on motives). Let me sketch some basic ideas to orient the reader.

To avoid making the language more cumbersome in what follows, the word `unique' (in reference to categories and functors) will be used instead of `unique up to equivalence', `unique up to natural isomorphism', etc.

The $\QQ$-linearization of $\cC'$ is a $\QQ$-linear category $\cC'\otimes \QQ$ equipped with a functor 
$\cC' \to \cC'\otimes \QQ$
satisfying the following universal property: if $\cC' \to \cA$ is a functor to a $\QQ$-linear category $\cA$, then there exists a unique functor $\cC'\otimes \QQ \to \cA$ such that the diagram
\[ \xymatrix{
\cC' \ar[r]\ar[d]& \cA \\
\cC'\otimes \QQ \ar@/^0pc/[ur]
}\]
commutes. If $\cC' \otimes \QQ$ exists, then it is unique. For an explicit construction, consider the category $\cA ux$ defined to have the same objects as $\cC'$, and morphisms given by
\[ \Hom_{\cA ux}(X,Y) = \QQ\mathrm{-span}\,(\Hom_{\cC'}(X,Y)-\{0\}). \]
Now $\cC'\otimes \QQ$ may be taken to be the completion of $\cA ux$ with respect to finite products.
The following is manifest from the construction.
\begin{prop}
The monoidal structure on $\cC'$ induces a monoidal structure on $\cC'\otimes \QQ$. The evident functor $\cC' \to \cC'\otimes \QQ$ is faithful and monoidal. If $\cC'\to \cA$ is a monoidal functor to a monoidal $\QQ$-linear category $\cA$, then the induced functor $\cC'\otimes \QQ \to \cA$ is also monoidal. 
\end{prop}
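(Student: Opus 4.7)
The plan is to work with the explicit model of $\cC'\otimes \QQ$ given just above the statement, namely the finite-product completion of the $\QQ$-linear category $\cA ux$. I would first equip $\cA ux$ with a monoidal structure extending that of $\cC'$, then transport it to the product completion, and finally check the universal property for monoidal functors.

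To put a monoidal structure on $\cA ux$: on objects, set $X\otimes Y$ as in $\cC'$; on morphisms, define the action of $\otimes$ on $\Hom_{\cA ux}(X,X')\otimes_{\QQ}\Hom_{\cA ux}(Y,Y')$ by $\QQ$-bilinear extension from the distinguished basis of nonzero morphisms in $\cC'$, using the tensor product already available there. Functoriality is immediate from functoriality in $\cC'$, and the associator and unitors can be taken to be the identities of the same underlying objects, so the coherence axioms transfer directly. I would then transport this to the finite-product completion by the formula $\prod_i X_i \otimes \prod_j Y_j := \prod_{i,j}(X_i\otimes Y_j)$, assembling all the structure maps using the universal property of finite products. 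The composite $\cC'\to \cA ux \to \cC'\otimes \QQ$ is monoidal by construction. Faithfulness follows because a nonzero morphism of $\cC'$ maps to a distinguished basis vector of the relevant $\QQ$-span while the zero morphism goes to zero, and $\cA ux \to \cC'\otimes \QQ$ is fully faithful by design of the product completion.

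For the universal property, let $F\colon \cC'\to \cA$ be a monoidal functor to a $\QQ$-linear monoidal category. The (non-monoidal) universal property already produces a unique $\QQ$-linear extension $\widetilde{F}\colon \cC'\otimes \QQ \to \cA$. The monoidal constraint $F(X)\otimes F(Y)\xrightarrow{\sim}F(X\otimes Y)$ on objects of $\cC'$ is inherited from $F$; $\QQ$-bilinearity of $\otimes$ in $\cA$ forces this to extend uniquely and naturally to $\cA ux$, and then to the product completion via the universal property of products in $\cA$. The pentagon and triangle axioms for $\widetilde{F}$ reduce to those for $F$ because each side of each axiom is $\QQ$-linear in morphisms and every morphism in $\cC'\otimes \QQ$ is a $\QQ$-linear combination of morphisms coming from $\cC'$ (after decomposing into product components).

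I do not anticipate any genuine obstacle. The main mild subtlety is the bookkeeping around the product-completion step: one has to check that the formula $\prod_i X_i \otimes \prod_j Y_j := \prod_{i,j}(X_i\otimes Y_j)$ is functorial in each variable separately, compatible with the inherited coherence data, and compatible with the monoidal constraint of $\widetilde{F}$. This is routine once phrased as a universal-property exercise, and the author's remark that such material is standard but under-referenced seems to be the reason for sketching, rather than writing out, the full verification.
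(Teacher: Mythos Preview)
Your proposal is correct and is precisely an explicit unpacking of what the paper means by ``manifest from the construction'': the paper offers no proof beyond that phrase, and your step-by-step verification via the explicit model $\cA ux$ and its product completion is exactly the routine check the author is leaving to the reader.
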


An additive category $\cA$ is called pseudo-abelian if every idempotent morphism in $\cA$ admits a kernel. Idempotents in pseudo-abelian categories split:
\begin{prop}[{\cite[Proposition 6.9]{K}}]
Let $e\colon E\to E$ be an idempotent morphism in a pseudo-abelian category. Then
\[ E = \ker(e) \oplus \ker(1-e).\]
Relative to this decomposition the endomorphism $e$ takes the form
\[ e = 0_{\ker(e)} \oplus \id_{\ker(1-e)}. \]
\end{prop}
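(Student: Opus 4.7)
The plan is to exhibit $E$ explicitly as the biproduct of $K_0 := \ker(e)$ and $K_1 := \ker(1-e)$. Both kernels exist by the pseudo-abelian hypothesis (note that $(1-e)^2 = 1 - 2e + e^2 = 1-e$ is again idempotent), and the biproduct $K_0 \oplus K_1$ exists because the category is additive. What remains is to produce an isomorphism $E \simeq K_0 \oplus K_1$ under which $e$ acts as $0 \oplus \id$.

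Write $i_0\colon K_0 \to E$ and $i_1\colon K_1 \to E$ for the canonical kernel inclusions. The crucial identities are $e(1-e) = 0 = (1-e)e$. By the first, $1-e$ factors uniquely through $i_0$, giving a morphism $p_0\colon E \to K_0$ with $i_0 p_0 = 1-e$; symmetrically, $e$ factors as $i_1 p_1$ for a unique $p_1\colon E \to K_1$. I claim that $(i_0,\, i_1)\colon K_0 \oplus K_1 \to E$ and $\bigl(p_0,\, p_1\bigr)\colon E \to K_0 \oplus K_1$ are mutually inverse.

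One composition is immediate: $i_0 p_0 + i_1 p_1 = (1-e) + e = \id_E$. The other direction uses the defining kernel equations $e i_0 = 0$ and $(1-e) i_1 = 0$ (equivalently $e i_1 = i_1$). They yield $i_0 p_0 i_0 = (1-e) i_0 = i_0$ and $i_0 p_0 i_1 = (1-e) i_1 = 0$; since $i_0$ is monic as a kernel, these give $p_0 i_0 = \id_{K_0}$ and $p_0 i_1 = 0$. The remaining matrix entries $p_1 i_1 = \id_{K_1}$ and $p_1 i_0 = 0$ follow by the symmetric argument. Under the resulting identification $E \simeq K_0 \oplus K_1$, the equality $e = i_1 p_1$ displays $e$ as $0_{K_0} \oplus \id_{K_1}$.

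There is no substantive obstacle: the proof is a diagram chase through the universal property of kernels together with the observation that $e$ and $1-e$ annihilate each other. The only point requiring care is to keep straight which idempotent each kernel refers to, and to exploit the monicity of the $i_j$ to cancel them when translating identities from $E$ back to the $K_j$.
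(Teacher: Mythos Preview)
Your argument is correct and is the standard proof: factor $1-e$ and $e$ through the respective kernels to obtain the projections, verify the biproduct identities using monicity of the kernel inclusions, and read off the matrix form of $e$. The paper itself does not supply a proof of this proposition but simply cites \cite[Proposition~6.9]{K}; your write-up is essentially the argument one finds there.
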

The Karoubi envelope of an additive category $\cA$ is a pseudo-abelian category $\widetilde{\cC}$ equipped with a full and faithful additive functor $\cA\to \widetilde{\cA}$ satisying the following universal property: if $\cA \to \cB$ is an additive functor from $\cA$ to a pseudo-abelian category $\cB$, then there exists a unique additive functor $\widetilde\cA \to \cB$ making the following diagram commute:
\[ \xymatrix{
\cA \ar[d]\ar[r] & \cB \\
\widetilde\cA \ar@/^0pc/[ru]
}\]
Here is an explicit construction: objects of $\widetilde\cA$ are pairs $(E, e)$ where $e\colon E\to E$ is an idempotent morphism in $\cA$. A morphism $(E, e) \to (E', e')$ is a morphism $f\colon E\to E'$ in $\cA$ such that $fe = e'f = f$. The functor $\cA \to \widetilde \cA$ is given by $X \mapsto (X,\id)$. If $F\colon \cA \to \cB$ is an additive functor to a pseudo-abelian category $\cB$, then $\widetilde\cA \to \cB$ is defined by $(E,e) \mapsto \ker(F(1-e))$. By \cite[Theorem 6.10]{K} this construction does indeed satisfy all the required properties. 
Further, the construction and result go through verbatim upon replacing `additive' by `$\QQ$-linear' everywhere.

Now suppose $\cA$ is also equipped with a monoidal structure. Then $\widetilde\cA$ inherits a monoidal structure (in the evident way) that upgrades $\cA \to\widetilde \cA$ to a monoidal functor. The reader may check:
\begin{prop}
If $\cA \to \cB$ is a monoidal functor to a monoidal pseudo-abelian category $\cB$, then the induced functor $\widetilde\cA \to \cB$ is monoidal.
\end{prop}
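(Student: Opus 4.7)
The plan is to construct the monoidality isomorphisms explicitly via the splitting of idempotents in $\cB$, and then reduce coherence to that already enjoyed by $F\colon \cA \to \cB$. Write $\widetilde F$ for the induced additive functor, so $\widetilde F(E,e) = \ker F(1-e)$. What must be exhibited is, for each pair $(E,e), (E',e')$, a natural isomorphism
\[ \widetilde F\bigl((E,e)\otimes(E',e')\bigr) \xrightarrow{\ \sim\ } \widetilde F(E,e) \otimes \widetilde F(E',e'), \]
together with a unit isomorphism $\e_\cB \cong \widetilde F(\e,\id)$, compatible with the associators and unitors.

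By the monoidal structure on the Karoubi envelope, the left-hand side is $\ker F(1 - e\otimes e')$. Using the monoidal structure isomorphisms of $F$ on $\cA$, I identify $F(E\otimes E')$ with $F(E)\otimes F(E')$, and under this identification $F(e\otimes e')$ with $F(e)\otimes F(e')$. So the problem reduces to showing: for any idempotents $p\colon X\to X$ and $q\colon Y\to Y$ in $\cB$,
\[ \ker(1 - p\otimes q) \;\cong\; \ker(1-p)\otimes \ker(1-q). \]
I apply the splitting result cited from Karoubi: $X = \ker(p)\oplus \ker(1-p)$ with $p = 0 \oplus \id$, and similarly $Y = \ker(q)\oplus \ker(1-q)$ with $q = 0\oplus \id$. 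Since the tensor product on an additive monoidal category is biadditive in each variable, tensoring yields a canonical 4-fold decomposition of $X\otimes Y$ on which $p\otimes q$ acts as $\id$ on the summand $\ker(1-p)\otimes \ker(1-q)$ and as $0$ on the other three. Hence $\ker(1 - p\otimes q)$ is precisely that summand, giving the required isomorphism canonically.

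Naturality follows automatically: a morphism $f\colon (E,e)\to (E'',e'')$ in $\widetilde\cA$ satisfies $f e = e'' f = f$, so $F(f)$ respects the splittings of the source and target, and restricting to the appropriate summands commutes with the tensor product. The unit isomorphism is $\widetilde F(\e,\id) = \ker F(0) = F(\e) \cong \e_\cB$ via the unit constraint of $F$. Coherence (pentagon and triangle) is inherited: the monoidality isomorphisms for $\widetilde F$ just constructed are simply the restrictions, to the relevant idempotent summands, of the monoidality isomorphisms of $F$, and the associators and unitors on $\widetilde\cA$ and $\cB$ come from those on $\cA$ and $\cB$, so the diagrams for $\widetilde F$ commute because the corresponding diagrams for $F$ do.

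The only point I expect to require care is the \emph{canonicity} of the splitting — one needs that the decomposition $X = \ker(p) \oplus \ker(1-p)$ is determined by $p$ itself, not merely up to non-canonical isomorphism, so that the identifications above are genuinely natural and fit into coherence diagrams without monodromy. This is, however, immediate from the explicit form of the inclusion and projection maps produced by the Karoubi splitting lemma. Beyond this, the argument is bookkeeping.
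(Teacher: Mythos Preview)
Your argument is correct and is the standard verification: split the idempotents in $\cB$, use biadditivity of the tensor product to identify $\ker(1-p\otimes q)$ with $\ker(1-p)\otimes\ker(1-q)$, and inherit coherence from $F$. The paper offers no proof at all---it simply writes ``The reader may check''---so there is nothing to compare against; your write-up is exactly the check the reader is invited to perform.
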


\section{Graded (integral) category $\cO$}
Write $\cO_0^{\ZZ}$ for the category of tuples $(\Psi, \Phi, \var, \can)$, where $\Psi,\Phi\in\mathrm{Vect}^{\ZZ}$, and $\var\colon \Phi \to \Psi\langle -1 \rangle$, $\can\colon \Psi\to \Phi\langle -1 \rangle$ are morphisms of graded vector spaces such that 
\[ \var\circ \can =0.\]
\[ \xymatrixrowsep{4pc}\xymatrix{
\Psi \ar@/^2pc/[d]^-{\can}_{+1} \\
\Phi \ar@/^2pc/[u]^-{\var}_{+1}
}\]
The category obtained by omitting gradings is a quiver description of a/any regular integral block of category $\cO$ for $\fsl_2$ (see \cite[\S5.1.1]{St}). The reader may find it pleasurable to also directly relate this category to perverse sheaves on the Riemann sphere with singularity (possibly) at $\infty$ (cf. \cite[\S4]{V}).

Set
\[ \cO^{\ZZ} = \mathrm{Vect}^{\ZZ} \oplus \bigoplus_{k\in \ZZ_{\geq 0}} \cO_{0}^{\ZZ}.\]
Then $\cO^{\ZZ}$ is a graded version of (integral) category $\cO$ for $\fsl_2$ (each copy of $\cO_0^{\ZZ}$ corresponds to a regular block and $\mathrm{Vect}^{\ZZ}$ corresponds to the most singular block).

We have tautologically defined projection functors:
\[ \pr_{-1}\colon \cO^{\ZZ} \to \mathrm{Vect}^{\ZZ} \quad \mbox{and} \quad
\pr_{k}\colon \cO^{\ZZ} \to \cO^{\ZZ}_0, \quad k\in \ZZ_{\geq 0}.\]
Each $\pr_k$ has an evident right inverse $i_k$. 

Define $\pi^{*}\colon \mathrm{Vect}^{\ZZ} \to \cO^{\ZZ}_0$ by
\[ \pi^{*}V = (V\langle 1 \rangle, V\langle 2 \rangle \oplus V, 
\smatrix{0 & \id_{V}}),
\smatrix{\id_{V\langle 1 \rangle} \\ 0 }). 
\]
\[  \xymatrixrowsep{4pc}\xymatrix{
V\langle 1 \rangle \ar@/^2pc/[d]^-{\smatrix{\id_{V\langle 1 \rangle} \\ 0 }}_{+1} \\
V \langle 2 \rangle \oplus V\ar@/^2pc/[u]^-{\smatrix{0 & \id_{V}}}_{+1}
}\]
Define $\pi_*\colon \cO_0^{\ZZ} \to \mathrm{Vect}^{\ZZ}$ by
\[ \pi_*(\Psi, \Phi, \var, \can) = \Phi.\]
The functor $\pi^*$ is left adjoint to $\pi_*$, and $\pi^*\langle -2\rangle$ is right adjoint to $\pi_*$. Furthermore, 
\[ \pi_*\pi^* = \id\oplus \id\langle 2\rangle.\] 
Under the dictionary with category $\cO$, the functor $\pi_*$ corresponds to translation to the wall, and $\pi^*$ corresponds to translation off the wall. Working topologically, $\pi_*$ corresponds to (the Koszul dual) of pushing to a point, and $\pi^*$ corresponds to (the Koszul dual of) pulling back from a point.

Define a functor $F\colon \cO^{\ZZ}\to \cO^{\ZZ}$ by:
\[ F(V) = i_0\pi^*\pr_{-1}(V) \oplus i_{-1}\pi_*\pr_0(V) \oplus i_1\pr_0(V) \bigoplus_{k\geq 1}i_{k-1}\pr_k(V) \oplus i_{k+1}\pr_k(V). \]
Under the translation to category $\cO$, the functor $F$ corresponds to tensoring with $L$ (the tautological $2$-dimensional irreducible representation of $\fsl_2$). 
\begin{prop}\label{actionprop}The functor $F$ yields an action of $\cC$ on $\cO^{\ZZ}$.
\end{prop}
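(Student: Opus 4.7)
The plan is to reduce the statement to the generators-and-relations description of $\cC$, define the two generating natural transformations by hand, and verify the three relations directly. Since $\End(\cO^{\ZZ})$ is monoidal, $\QQ$-linear, and pseudo-abelian, the universal properties of the $\QQ$-linearization and Karoubi envelope recorded above imply that a monoidal functor $\cC\to\End(\cO^{\ZZ})$ is the same as a monoidal functor $\cC'\to\End(\cO^{\ZZ})$. By the presentation of $\cC'$ from Section~\ref{s:gens}, this amounts to specifying: the endofunctor $F$ (given), natural transformations $\eta\colon\id\to F^2$ and $\epsilon\colon F^2\to\id$, and verifying $\epsilon\circ\eta=\id$, $(F\epsilon)\circ(\eta F)=0$, and $(\epsilon F)\circ(F\eta)=0$.

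To define $\eta$ and $\epsilon$, one notes that in each block $V$ is naturally a direct summand of $F^2(V)$. Indeed, for $V$ in block $-1$ one has $F^2(V)|_{-1}=\pi_*\pi^*V=V\langle 2\rangle\oplus V$; for $V$ in block $0$, $F^2(V)|_0=\pi^*\pi_*V\oplus V$; and for $V$ in block $k\geq 1$, $F^2(V)|_k=V\oplus V$, the two copies arising from the two length-two paths $k\to k\pm 1\to k$. I would define $\eta_V$ as the inclusion into, and $\epsilon_V$ as the projection from, the plain-$V$ summand (and, at block $k\geq 1$, specifically the copy coming from the path through block $k+1$). With these choices relation (i) is tautological: $\epsilon_V\circ\eta_V$ is an inclusion followed by the matching projection.

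The substantive work is relations (ii) and (iii). The strategy is to expand $F^3(V)$ as a direct sum indexed by length-three paths through the block diagram; both $(\eta F)_V$ and $(F\eta)_V$ have image in a specific collection of path summands, while $(F\epsilon)_V$ and $(\epsilon F)_V$ are nonzero on a different specific collection. With the one-sided choice above, the image paths of the first transformations are disjoint from the support paths of the second, so the compositions vanish. This has to be verified in a handful of cases (generic block $k\geq 2$, the near-boundary blocks $0$ and $1$, and the singular block $-1$), each of which reduces to a short path-combinatorial check.

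The main obstacle is the singular-block boundary. Unlike $\pi_*\pi^*V$, the object $\pi^*\pi_*V$ is not a shifted sum of copies of $V$, so the $\pi^*\pi_*V$-summand of $F^2(V)$ at block $0$ must be handled separately. A direct quiver-level computation in $\cO_0^{\ZZ}$ shows that $\Hom(\id,\pi^*\pi_*)=0$ (the only natural degree-raising endomorphism of the functor $V\mapsto\Phi$ has the wrong grading), which forces the $\pi^*\pi_*$-component of $\eta_0$ to vanish automatically. A parallel analysis of relation (ii) applied at $V$ in block $0$ forces the $\pi^*\pi_*$-component of $\epsilon_0$ to vanish as well; once these vanishings are in place the remaining boundary verifications reduce to path-combinatorial checks as in the generic case, and the proposition follows.
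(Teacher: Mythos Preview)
Your approach is the paper's approach: reduce via the universal properties of $\QQ$-linearization and Karoubi completion to producing $\eta\colon\id\to F^2$ and $\epsilon\colon F^2\to\id$ satisfying (i)--(iii), define them block by block as the evident inclusion and projection onto a chosen copy of $V$ in $F^2(V)$ (your choice at $k\ge 1$ of the path through $k+1$ is exactly the paper's ``second factor''), and then check the relations. The paper simply declares (ii)--(iii) ``straightforward to verify'' where you sketch the path combinatorics and flag the boundary blocks as the only places needing care.

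One comment on the last paragraph: the digression into $\Hom(\id,\pi^*\pi_*)=0$ and into relation (ii) ``forcing'' the $\pi^*\pi_*$-component of $\epsilon_0$ to vanish is not needed for the existence argument. You have already \emph{defined} $\eta_0$ and $\epsilon_0$ to be the inclusion and projection onto the plain-$V$ summand, so their $\pi^*\pi_*$-components are zero by fiat; with those fixed choices you just verify (ii)--(iii) directly. The vanishing of $\Hom(\id,\pi^*\pi_*)$ is correct (via $\pi_*\dashv\pi^*\langle -2\rangle$, a natural map $X\to\pi^*\pi_*X$ amounts to a natural map $\Phi\to\Phi\langle 2\rangle$, and there are none), but it is a uniqueness statement rather than an ingredient in existence; invoking it inside the verification makes the logic look more circular than it actually is.
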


\begin{proof}As $\cO^{\ZZ}$ is abelian, it is pseudo-abelian. Clearly $\cO^{\ZZ}$ is $\QQ$-linear. Hence, it suffices to find morphisms $\id \to F^2$ and $F^2\to \id$ satisfying (i)-(iii) of \S\ref{s:gens}.
Define these morphisms `block by block' as follows. For $V\in \mathrm{Vect}^{\ZZ}$,
\[ F^2(i_{-1}V) = i_1\pi^*V \oplus i_{-1}\pi_*\pi^*V = i_1\pi^*V \oplus i_{-1}V \oplus i_{-1}V\langle 2\rangle.\]
Let $i_{-1}V \to F^2(i_{-1}V)$ and $F^2(i_{-1}V) \to i_{-1}V$ be the evident inclusion and projection maps. Then, for $V\in \cO_0^{\ZZ}$,
\[ F^2(i_0V) = i_0V \oplus i_2V \oplus i_0\pi^*\pi_*V.\]
Define $i_0V \to F^2(i_0V)$ and $F^2(i_0V)\to i_0V$ to be the obvious inclusion and projection.
For $k>0$,
\[ F^2(i_kV) =  i_{k-2}V \oplus i_{k}V \oplus i_{k}V\oplus i_{k+2}V.\]
Let $i_kV\to F^2(i_k V)$ and $F^2(i_kV)\to i_kV$ be the inclusion and projection relative to the second factor.

These morphisms clearly satisfy condition (i) of \S\ref{s:gens}. It is straightforward to verify that they also satisfy conditions (ii)-(iii).
\end{proof}

\section{Crystals}
A crystal will always mean a finite, normal, $\fsl_2$-crystal (see \cite{HK}). This is the data of a finite set $B$ together with maps
 \[ \wt\colon B \to \ZZ, \quad \varepsilon,\phi\to\ZZ, \quad e,f\colon B \to B \sqcup \{0\}, \]
 satisfying the following axioms:
 \begin{enumerate}
 \item For any $b\in B$, $\phi(b) = \varepsilon(b) + \wt(b)$.
 \item Let $b\in B$. If $eb \in B$, then
 \[ \wt(eb) = \wt(b) + 2, \quad \varepsilon(eb) = \varepsilon(b)-1,\quad \phi(eb) = \phi(b)+1.\]
 \item Let $b\in B$. If $fb\in B$, then
 \[ \wt(fb) = \wt(b)-2, \quad \varepsilon(fb) = \varepsilon(b)+1,\quad \phi(fb) = \phi(b)-1.\]
 \item For all $b,b'\in B$ one has $b=eb'$ if and only if $fb=b'$.
 \item For each $b\in B$,
 \[ \varepsilon(b) = \mathrm{max}\{n\,|\, e^n b\neq 0\}, \quad 
 \phi(b) = \mathrm{max}\{n \,|\, f^nb\neq 0\}.\]
 \end{enumerate}
 A morphism of crystals $A\to B$ is a map of sets $\beta\colon A \to B\sqcup \{0\}$ compatible with $e,f,\varepsilon,\phi$ and $\wt$. 
Let $\mathrm{Crystals}$ denote the category of crystals. 
If $B$ and $B'$ are crystals, then the disjoint union $B\sqcup B'$ is a crystal, denoted $B\oplus B'$, in the evident way. This is the coproduct in $\mathrm{Crystals}$.

For $n\in\ZZ_{\geq 0}$ let $\BB(n) = \{v_n, v_{n-2}, \ldots, v_{-n}\}$. Define a crystal structure on $\BB(n)$ by $\wt(v_k) = k$ and 
 \[ ev_k = \begin{cases}
 0 & \mbox{if $k=n$}; \\
 v_{k+2} & \mbox{otherwise}.
 \end{cases} \]

 Let $B,B'$ be crystals. Their tensor product $B\otimes B'$ is defined as follows. As a set $B\otimes B' = B\times B'$. For $a\in B$ and $b\in B'$, write $a\otimes b$ for the corresponding element in $B\times B'$. Then we set
 \begin{align*}
 \wt(a\otimes b) &=\wt(a)+\wt(b), \\
 e(a\otimes b)&= \begin{cases}
 ea\otimes b & \mbox{if $\varepsilon(a) > \phi(b)$}, \\
 a\otimes eb & \mbox{otherwise};
 \end{cases} \\
 f(a\otimes b) &= \begin{cases}
 fa\otimes b &\mbox{if $\varepsilon(a) \geq \phi(b)$}, \\
 a\otimes fb &\mbox{otherwise};
 \end{cases} \\
 \varepsilon(a\otimes b) &= \mathrm{max}\{\varepsilon(b), \varepsilon(a) - \wt(b)\}, \\
 \phi(a\otimes b) &= \mathrm{max}\{\phi(a), \phi(b)+\wt(a)\}.
 \end{align*}
 This endows $\mathrm{Crystals}$ with a monoidal structure. The unit is $\BB(0)$. 

There is a unique isomorphism:
 \[ \BB(1) \otimes \BB(1) \simeq \e \oplus \BB(2) \]
 given by
 \begin{align*}
 \e &\to \BB(1) \otimes \BB(1),  \quad v_{0} \mapsto v_{-1}\otimes v_1, \\
 \BB(2) &\to \BB(1)\otimes \BB(1), \quad v_{2} \mapsto v_1\otimes v_1.
 \end{align*}
 The compositions 
 \[ \BB(1) \to \BB(1)\otimes (\BB(1)\otimes \BB(1)) = (\BB(1)\otimes\BB(1))\otimes \BB(1) \to \BB(1), \]
 \[ \BB(1) \to (\BB(1)\otimes \BB(1))\otimes \BB(1) = \BB(1)\otimes (\BB(1)\otimes \BB(1))\to \BB(1) \]
 are zero.

In view of the above relations, one might naively expect that $\BB(1)\mapsto F$ yields an equivalence between the $\QQ$-linearization of $\mathrm{Crystals}$ and the category $\cC$ of \S\ref{s:gens}. In view of Proposition \ref{actionprop}, this would achieve the goal of this document. Sadly, this is not to be:
\[ |\Hom_{\mathrm{Crystals}}(\BB(1), \BB(1)\oplus \BB(1))| \neq |\Hom_{\mathrm{Crystals}}(\BB(1)\oplus \BB(1), \BB(1))|.\] 
(There are two non-zero morphisms $\BB(1)\to \BB(1) \oplus \BB(1)$, and three non-zero morphisms $\BB(1)\oplus \BB(1) \to \BB(1)$; the `extra' morphism $\BB(1)\oplus\BB(1)\to \BB(1)$ is what one would like to call the `sum' of the two `projections' $\BB(1)\oplus\BB(1)\to\BB(1)$).
Consequently,
\[ |\Hom_{\mathrm{Crystals}}(\BB(1)^{\otimes 3}, \BB(1)^{\otimes 3}) - \{0\}| \neq \dim_{\QQ}(F^3, F^3). \]
This rules out the possibility of the hoped for equivalence.

Regardless, this delinquent behaviour points to a solution. Informally, $\cC$ is a quotient of the $\QQ$-linearization of $\mathrm{Crystals}$. Among other relations, the quotient identifies the sum of the two projections $\BB(1)\oplus \BB(1) \to \BB(1)$ with the `extra' morphism mentioned above. Precisely identifying this quotient is the cause of all the merry contortions of the next section. Before diving into this let's at least formally record:

 \begin{prop}The assignment $F\mapsto \BB(1)$ yields a faithful monoidal functor 
 \[\cC'\to \mathrm{Crystals}.\]
\end{prop}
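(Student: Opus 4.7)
Since $\cC'$ is presented as a free monoidal category on $F$, the morphisms $\e\to F^2$ and $F^2\to \e$, and relations (i)--(iii) of \S\ref{s:gens}, defining a monoidal functor amounts to choosing images of these generators and verifying the three relations. I would assign $F\mapsto \BB(1)$, send $\e\to F^2$ to the inclusion $\e\hookrightarrow \BB(1)\otimes \BB(1)$ and $F^2\to \e$ to the projection coming from the isomorphism $\BB(1)\otimes\BB(1)\simeq \e\oplus\BB(2)$. Relation (i) is immediate from the direct sum decomposition, and (ii)--(iii) are exactly the vanishing compositions recorded in the display just above the proposition.

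\textbf{Faithfulness.} By the first proposition of \S\ref{s:gens}, the non-zero morphisms of $\Hom_{\cC'}(F^m,F^n)$ are indexed by non-crossing matchings $M$ of $m+n$ marked points ($m$ on the bottom, $n$ on the top). I need to check (a) each $M$ induces a non-zero morphism $f_M\colon \BB(1)^{\otimes m}\to \BB(1)^{\otimes n}$ in $\mathrm{Crystals}$, and (b) distinct matchings give distinct $f_M$. The key computation is that $f_M$ on a basis element reduces (via the componentwise tensor product rule for crystal morphisms) to iterated applications of cup, $v_0\mapsto v_{-1}\otimes v_1$, and cap, $v_{-1}\otimes v_1\mapsto v_0$ (other basis vectors to $0$). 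Unpacking, $f_M(v_{\epsilon_1}\otimes\cdots\otimes v_{\epsilon_m})=0$ unless each bottom cup $(i,j)$ with $i<j$ satisfies $\epsilon_i=-1, \epsilon_j=+1$; when nonzero, the image equals $v_{\delta_1}\otimes\cdots\otimes v_{\delta_n}$, where each top cap $(p,q)$ with $p<q$ forces $\delta_p=-1, \delta_q=+1$ and each through-string $k\to \ell$ sets $\delta_\ell=\epsilon_k$.

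Condition (a) follows by choosing any input compatible with the bottom-cup constraints. For (b) I would recover $M$ from $f_M$: the bottom (respectively top) cup endpoints are exactly those positions whose sign is forced on every non-zero input (respectively every output); the cup (respectively cap) matching among them is the unique non-crossing pairing of the resulting Dyck-like sign pattern; and the remaining unconstrained positions are through-string endpoints, which the non-crossing condition pairs in order. The main obstacle is this combinatorial reconstruction step: one must carefully disentangle the contributions of cups, caps, and through-strings from the set-theoretic function $f_M$, but once the explicit formula above is in hand the rest is elementary bookkeeping with non-crossing matchings.
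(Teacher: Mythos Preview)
Your argument is correct and follows the same line as the paper's proof, which simply states that the functor exists (since the relations (i)--(iii) were verified for $\BB(1)$ just before the proposition) and that ``faithfulness follows from the string diagram description of morphisms in $\cC'$.'' You have supplied the combinatorial details behind that sentence: computing $f_M$ explicitly on basis vectors and then reconstructing the matching $M$ from $f_M$, which is precisely what the paper leaves implicit.
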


\begin{proof}
Clearly, we have a monoidal functor $\cC'\to\mathrm{Crystals}$. Faithfulness follows from the string diagram description of morphisms in $\cC'$.
\end{proof}

\section{$\mathrm{Crystals}_{\QQ}$}
Let $\cC''$ be the following category:
\begin{itemize}
\item objects: $\bigoplus_{n\in\ZZ_{\geq 0}} \BB(n) \boxtimes V_n$, where $V_n$ is a finite set;
\item morphisms: a morphism $\bigoplus_{n\in \ZZ_{\geq 0}}\BB(n) \boxtimes V_n \to \bigoplus_{n\in \ZZ_{\geq 0}} \BB(n) \boxtimes W_n$ is the datum of a map of sets $V_n \to W_n \sqcup \{0\}$ for each $n$.
\end{itemize}
\begin{prop}\label{combdescription}
The category $\mathrm{Crystals}$ is equivalent to $\cC''$.
\end{prop}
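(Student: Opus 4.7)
The plan is to construct an explicit equivalence $\Phi\colon \mathrm{Crystals}\to \cC''$ by reading off the highest weight data of a crystal, and verify that the decomposition of crystals into summands isomorphic to some $\BB(n)$ is both exhaustive and morphism-respecting.

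First I would send a crystal $B$ to $\bigoplus_{n\geq 0}\BB(n)\boxtimes V_n(B)$, where
\[ V_n(B) = \{\, b\in B : \wt(b)=n,\ \varepsilon(b)=0\,\}. \]
On morphisms, a crystal map $\beta\colon A\to B$ preserves weight and $\varepsilon$, so it restricts to a map of sets $V_n(A)\to V_n(B)\sqcup\{0\}$ for each $n$; this defines $\Phi$.

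The key structural step is the classification of crystals: every finite normal $\fsl_2$-crystal $B$ decomposes as $B = \bigsqcup_{b\in V(B)} B_b$, where $V(B) = \bigsqcup_n V_n(B)$ and $B_b = \{b, fb, f^2b, \ldots, f^{\wt(b)}b\}$. Axiom (v) and the normality conditions (i)--(iv) force $B_b$ to be closed under $e$ and $f$ and to be isomorphic to $\BB(\wt(b))$ via $f^k b\mapsto v_{\wt(b)-2k}$; the $B_b$ are disjoint because axiom (iv) says each element has a unique $e$-predecessor (or none, putting it in some $V_n(B)$). This reduces the proof to analyzing morphisms between the $\BB(n)$.

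Next I would pin down $\Hom(\BB(n),\BB(m))$. A nonzero morphism $\beta$ sends $v_n$ to some $b$ with $\wt(b)=n$ and $\varepsilon(b)=0$; in $\BB(m)$ this forces $b = v_m$ with $n=m$, and then $\beta(v_{n-2k})=f^k\beta(v_n)=v_{n-2k}$ is completely determined, giving $\beta=\id$. Combined with the decomposition above, this shows that any crystal morphism $A\to B$ is the data of, for each highest weight element $a\in V_n(A)$, either a target $b\in V_n(B)$ (indicating into which $\BB(n)$-summand the generated sub-crystal $B_a$ lands, isomorphically) or $0$. This is exactly the datum of a morphism in $\cC''$, so $\Phi$ is fully faithful.

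Finally, essential surjectivity is immediate: given $\bigoplus_n \BB(n)\boxtimes V_n$ in $\cC''$, form the crystal $\bigsqcup_{n}\bigsqcup_{v\in V_n}\BB(n)$; its highest weight data in each degree $n$ is canonically identified with $V_n$. There is no serious obstacle — the only point requiring care is pinning down the \emph{canonical} choice $V_n(B)$ so that $\Phi$ is a functor on the nose rather than only up to the non-canonical choice of summand decomposition.
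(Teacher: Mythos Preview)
Your proof is correct and takes essentially the same approach as the paper: the paper defines the functor via $B \mapsto \bigoplus_n \BB(n)\boxtimes \Hom^{\neq 0}(\BB(n),B)$ with post-composition on morphisms, which is canonically identified with your highest-weight description $V_n(B)=\{b\in B:\wt(b)=n,\ \varepsilon(b)=0\}$ once one observes (as you do) that a nonzero map $\BB(n)\to B$ is determined by the image of $v_n$. You have simply unpacked and justified what the paper asserts in one line.
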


\begin{proof}For a crystal $B$, let $\Hom^{\neq 0}(\BB(n), B)$ be the set of non-zero morphisms $\BB(n)\to B$. Define a functor $G\colon\mathrm{Crystals} \to \cC''$ by
\[ G(B) = \bigoplus_{n\in \ZZ_{\geq 0}} \BB(n) \boxtimes \Hom^{\neq 0}(\BB(n), B). \]
For a morphism of crystals $\phi\colon B\to B'$, define the family of maps $G(\phi)$ as follows. Let $f\in \Hom^{\neq 0}(\BB(n), B)$.
Then $G(\phi)$ maps $f$ to $\phi\circ f$. This functor is full and faithful. Further, its essential image is $\cC''$.
\end{proof}
Identify $\mathrm{Crystals}$ with $\cC''$. Define finite sets $V_{ij}^k$ by:
\begin{equation}\label{finsets} \BB(i) \otimes \BB(j) = \bigoplus_k \BB(k) \boxtimes V_{ij}^k. \end{equation}
Then
\begin{align*}
(\BB(i) \otimes \BB(j)) \otimes \BB(k) &= \bigoplus_{l} \BB(l) \boxtimes \left( \bigsqcup_{\alpha} V_{ij}^{\alpha} \times V_{\alpha k}^l \right), \\
\BB(i) \otimes (\BB(j)\otimes \BB(k)) &= \bigoplus_l \BB(l) \boxtimes
\left( \bigsqcup_{\beta} V_{i\beta}^l \times V_{jk}^{\beta} \right).
\end{align*}
The (strict) associativity of $\mathrm{Crystals}$ yields bijections:
\begin{equation}\label{associ} \phi^{l}_{ijk} \colon\bigsqcup_{\alpha} V_{ij}^{\alpha} \times V_{\alpha k}^l \mapright{\sim}
\bigsqcup_{\beta} V_{i\beta}^l \times V_{jk}^{\beta}.\end{equation}
These bijections also satisfy some equations (born from the pentagon axiom) which we need not explicitly formulate.

In effect, $\cC''$ yields a more linear algebraic description of $\mathrm{Crystals}$ at the cost of losing strict associativity.
Now define $\mathrm{Crystals}_{\QQ}$ as follows:
\begin{itemize}
\item objects: $\bigoplus_{n\in\ZZ_{\geq 0}} \BB(n) \boxtimes V_n$, where $V_n$ is a finite set;
\item morphisms: a morphism $\bigoplus_{n\in \ZZ_{\geq 0}}\BB(n) \boxtimes V_n \to \bigoplus_{n\in \ZZ_{\geq 0}} \BB(n) \boxtimes W_n$ is the datum of a linear map 
\[ \QQ\mathrm{-span}\, V_n \to \QQ\mathrm{-span}\, W_n, \]
for each $n$.
\end{itemize}
$\mathrm{Crystals}_{\QQ}$ is $\QQ$-linear and pseudo-abelian. 
We have an evident faithful functor
\begin{equation}\label{ft} \mathrm{Crystals} \mapright{\sim} \cC'' \to \mathrm{Crystals}_{\QQ}. \end{equation}
Note:
\[ \dim_{\QQ}\Hom_{\mathrm{Crystals_{\QQ}}}(\BB(1)^{\otimes m}, \BB(1)^{\otimes n}) = \dim_{\CC}\Hom_{\mathrm{Rep}(\fsl_2)}(L^{\otimes m}, L^{\otimes n}). \]
Further, the formula \eqref{finsets} yields a bifunctor on $\mathrm{Crystals}_{\QQ}$. We define an associativity constraint using the data \eqref{associ}. This upgrades $\mathrm{Crystals} \to\mathrm{Crystals}_{\QQ}$ to a monoidal functor.

\begin{prop}$\mathrm{Crystals}_{\QQ}$ is monoidally equivalent to $\cC$.
\end{prop}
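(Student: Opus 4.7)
The plan is to construct a monoidal functor $\Phi\colon \cC \to \mathrm{Crystals}_{\QQ}$ by universal properties, and then verify that it is fully faithful and essentially surjective. The faithful monoidal functor $\cC'\to\mathrm{Crystals}$ from the previous proposition composes with the canonical monoidal functor $\mathrm{Crystals}\to\mathrm{Crystals}_{\QQ}$ of \eqref{ft}; since the target is $\QQ$-linear and pseudo-abelian, the two universal properties recorded in \S\ref{s:gens} successively promote this composition to a monoidal functor $\Phi$.

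The heart of the proof is showing that $\Phi$ is fully faithful on the full subcategory spanned by $\{F^m\}_{m\geq 0}$. By the dimension equalities collected earlier,
\[
\dim_{\QQ}\Hom_{\cC}(F^m,F^n)\;=\;\dim_{\CC}\Hom_{\mathrm{Rep}(\fsl_2)}(L^{\otimes m},L^{\otimes n})\;=\;\dim_{\QQ}\Hom_{\mathrm{Crystals}_{\QQ}}(\BB(1)^{\otimes m},\BB(1)^{\otimes n}),
\]
so it suffices to check that $\Phi$ sends the string-diagram basis of $\Hom_{\cC}(F^m,F^n)$ to a linearly independent subset. I plan to argue this combinatorially. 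Every string diagram $d\colon F^m\to F^n$ factors as $d=u_{M'}\circ c_M$, where $c_M\colon F^m\to F^k$ is the cap determined by the bottom planar non-crossing matching $M$ (with $k$ through-strand defects) and $u_{M'}\colon F^k\to F^n$ is the cup for the top matching $M'$. Under the standard bijection between non-crossing matchings of $m$ points with $k$ defects and $\Hom^{\neq 0}(\BB(k),\BB(1)^{\otimes m})$ (i.e.\ highest weight vectors of weight $k$), a direct calculation on the $\pm 1$-bracket sequences shows that $\Phi(c_M)$ sends the highest weight vector corresponding to $M$ to the unique highest weight vector of $\BB(1)^{\otimes k}$ and annihilates every other highest weight vector of weight $k$; dually for $\Phi(u_{M'})$. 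Consequently, the $\BB(k)$-isotypic component of $\Phi(d)$ is a matrix unit, while $\Phi(d)$ has vanishing component on $\BB(j)$ for $j>k$ since it factors through $\BB(1)^{\otimes k}$.

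The main obstacle is precisely this combinatorial verification, which ultimately reduces to the fact that a non-crossing matching of $m$ points with $k$ defects is determined by its set of left endpoints. Granting this, linear independence of the $\Phi(d)$'s follows by filtering by decreasing thickness: at the top thickness only the maximum-thickness diagrams contribute to the corresponding $\BB(k)$-isotypic piece, and they contribute distinct matrix units; peeling these coefficients off, one descends in thickness and repeats.

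Essential surjectivity is then straightforward. Every object of $\mathrm{Crystals}_{\QQ}$ is a finite direct sum of $\BB(n)\boxtimes\{*\}$'s, and each $\BB(n)$ is cut out from $\BB(1)^{\otimes n}$ by the idempotent projecting onto its (multiplicity one) $\BB(n)$-isotypic summand. By fullness of $\Phi$, this idempotent equals $\Phi(e_n)$ for some $e_n\in\End_{\cC}(F^n)$ (necessarily idempotent by full faithfulness), whence $(F^n,e_n)\in\cC$ maps to $\BB(n)$. Full faithfulness extends from powers of $F$ to all of $\cC$ automatically by the universal property of the Karoubi envelope and pseudo-abelianness of the target. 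Since $\Phi$ is monoidal by construction, it is a monoidal equivalence.
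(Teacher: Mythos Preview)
Your argument is correct and follows the same architecture as the paper's proof: build $\Phi\colon\cC\to\mathrm{Crystals}_{\QQ}$ from $\cC'\to\mathrm{Crystals}\to\mathrm{Crystals}_{\QQ}$ via the universal properties of $\QQ$-linearization and Karoubi completion, establish faithfulness on the $F^m$'s, combine with the dimension equality to get fullness, and deduce essential surjectivity from the fact that each $\BB(n)$ is a summand of $\BB(1)^{\otimes n}$.

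The only substantive difference is how faithfulness is handled. The paper simply asserts that the faithful monoidal functor $\cC'\to\mathrm{Crystals}_{\QQ}$ induces a faithful functor $\cC\to\mathrm{Crystals}_{\QQ}$, and then invokes the dimension equality. You instead supply an explicit combinatorial proof that the images of the string-diagram basis are linearly independent, via the cap/cup factorization $d=u_{M'}\circ c_M$, the identification of the $\BB(k)$-isotypic component of $\Phi(d)$ with a matrix unit, and a descending filtration by thickness. This is a genuine strengthening of the exposition: faithfulness of a functor out of $\cC'$ does not in general survive $\QQ$-linearization (distinct set maps can become linearly dependent linear maps), so the paper is tacitly relying on exactly the kind of verification you carry out. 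Your approach costs a bit of crystal combinatorics (the ``direct calculation'' that $\Phi(c_M)$ kills all weight-$k$ highest weight vectors except the one labeled by $M$, which does hold under the bracket-pairing rule), but in return it makes the faithfulness step honest rather than asserted.
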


\begin{proof}As $\mathrm{Crystals}_{\QQ}$ is $\QQ$-linear and pseudo-abelian, the faithful monoidal functor $\cC' \to \mathrm{Crystals}\to \mathrm{Crystals}_{\QQ}$ induces a faithful monoidal functor 
\[ \cC \to \mathrm{Crystals}_{\QQ}.\]
As
\begin{align*}
\dim_{\QQ} \Hom_{\cC}(F^m, F^n) &= \dim_{\CC} \Hom_{\mathrm{Rep}(\fsl_2)}(L^{\otimes m}, L^{\otimes n}) \\
&= \dim_{\QQ} \Hom_{\mathrm{Crystals}_{\QQ}}(\BB(1)^{\otimes m}, \BB(1)^{\otimes n}), 
\end{align*}
the faithfulness implies that the functor is also full. As $\BB(n)$ occurs as a direct summand of $\BB(1)^{\otimes n}$, and every object in $\mathrm{Crystals}_{\QQ}$ is a direct sum of $\BB(k)$s, it follows that $\cC\to \mathrm{Crystals}_{\QQ}$ is essentially surjective.
\end{proof}
In view of Proposition \ref{actionprop} and \eqref{ft} this yields an action of $\mathrm{Crystals}$ on $\cO^{\ZZ}$.

\end{document}